\numberwithin{equation}{section} 
\numberwithin{figure}{section} 
\theoremstyle{plain}
\theoremstyle{plain}
\newtheorem{thm}{Theorem}
  \theoremstyle{plain}
  \newtheorem{prop}[thm]{Proposition}
  \theoremstyle{plain}
  \theoremstyle{remark}
  \newtheorem{rem}[thm]{Remark}
  \theoremstyle{plain}
 \newtheorem{definition}[thm]{Definition}
\begin{document}

\title{Geometric Spaces with No Points}

\author{Robert S. Lubarsky\\
 Florida Atlantic University\\
 Dept. of Mathematical Sciences \\
 777 Glades Rd., Boca Raton, FL 33431, USA\\
 Robert.Lubarsky@alum.mit.edu \\
 }

\maketitle
\thispagestyle{empty}
\begin{abstract}
Some models of set theory are given which contain sets that have
some of the important characteristics of being geometric, or
spatial, yet do not have any points, in various ways. What's
geometrical is that there are functions to these spaces defined on
the ambient spaces which act much like distance functions, and
they carry normable Riesz spaces which act like the Riesz spaces
of real-valued functions. The first example, already sketched in
\cite{FH}, has a family of sets, each one of which cannot be
empty, but not in a uniform manner, so that it is false that all
of them are inhabited. In the second, we define one fixed set
which does not have any points, while retaining all of these
geometrical properties.\\KEYWORDS: constructive analysis,
point-free geometry, topological models, Riesz spaces\\MSC 2010:
03F50, 03C62, 03E25, 03E70, 46A40, 46B40
\end{abstract}

\section{Introduction}

There has been increasing interest over the years in the
point-free approach to mathematics. Examples of this include
toposes as a model of set theory, in which the objects play the
role of sets and arrows functions, and the members of sets play no
role in the axiomatization (see \cite{MM} for a good introduction
and further references); locales (or frames) for the study of
topology, where the opens of a topological space are the objects
of the locale and the points of the space again are not accounted
for in the axiomatization \cite{J}; and point-free geometry, first
developed by Whitehead almost a century ago \cite{Wh1,Wh2,Wh3}.
This paper is a contribution to point-free geometry and analysis,
albeit from a more modern perspective than Whitehead's.

Part of the motivation for this approach is increased uniformity of
the results. A statement that applies to individual points obscures
the uniformity or continuity of the outcome. Point-free mathematics
also can provide stronger results and be more widely applicable, by
proving theorems with weaker hypotheses. Sometimes assumptions are
made in a theorem, such as Excluded Middle or Countable Choice, which
serve only to build certain points, whereas the actual content of
the construction lies elsewhere. By eliminating reference to points,
such powerful axioms can themselves be eliminated, and attention focused
on what's most important.

The framework of this paper is that of constructive mathematics,
and so Excluded Middle is not assumed. Since constructive
reasoning is a widely known and accepted paradigm, this is not
what is novel about this work. Rather, what is a bit different is
to work without Countable Choice. That is, we want to develop
models, necessarily violating CC, in which interesting phenomena
happen. This focus on models indicates a big difference from other
efforts. Much of the literature on point-free mathematics develops
theories and proves theorems in a point-free framework. The goal
of this work is to present some models.

\section{Polynomials without Uniform Roots}

In \cite{Ri}, a plausibility argument is given for why the
assertion that $X^{2}-a$ has a root ($a$ a complex number) is
tantamount to accepting a certain choice principle. The argument
comes down to the fact that there is no continuous square root
function on any neighborhood of 0 in the complex numbers. Of
course, if $a$ is in a simply connected region excluding 0, then
one of the two square roots can be chosen arbitrarily; if $a$
equals 0, then 0 itself is a square root; it is when we don't know
whether $a$ is 0 or not that we're not able to define a square
root. That choice plays a role here is indicated by the fact that
polynomials over $\mathbb{C}$ have roots in $\mathbb{C}$, if the
coefficients are limits of Cauchy sequences of complex rational
numbers \cite{R}; that every complex number (given as a pair of
Dedekind cuts in the reals) is the limit of a Cauchy sequence is a
typical application of Countable Choice. In \cite{FH}, this
plausibility argument is made more precise by casting it as a
topological model. The main point of this section is to provide
the details of this latter model, in which not all complex numbers
have square roots.

Before turning to this model, let's clarify its significance by
gathering some previously known results, working within IZF to be
definite. One would expect that the set of roots of any polynomial
over $\mathbb{C}$, as a simply defined subset of $\mathbb{C}$,
would enjoy some nice properties from having such a lineage. That
is indeed the case. For instance, the root set of a polynomial
over $\mathbb{C}$ is {\em quasi-located}, in the following sense:

\begin {definition}
For $S \subseteq \mathbb R$, the {\bf infimum} of $S$ is the real
number $\inf(S)$ such that $\inf(S)$ is a lower bound of $S$ --
i.e. for all $x \in S \; \inf(S) \leq x$ -- which is within any
$\epsilon > 0$ of $S$ -- $\forall \epsilon > 0 \; \exists x \in S
\; x < \inf(S) + \epsilon$.
\end {definition}

That works fine as long as $S$ is inhabited (has a member). If
it's not, as in the case at hand, we need a different definition.

\begin {definition}
For $S \subseteq \mathbb R$, the {\bf greatest lower bound} of $S$
is the real number ${\rm glb}(S)$ such that ${\rm glb}(S)$ is a
lower bound of $S$ -- i.e. for all $x \in S \; {\rm glb}(S) \leq
x$ -- which is bigger than any other lower bound -- for all $x$ if
$x$ is a lower bound of $S$ then $x \leq {\rm glb}(S)$.
\end {definition}

Corresponding to these two notion $\inf$ and ${\rm glb}$ are two
different notions of distance.

\begin {definition}
For $L$ a subset of a metric space, the {\bf distance} $d(x,L)$
from a point $x$ in the space to $L$ is $\inf_{y \in L}d(x,y)$,
where $d$ is the metric in the space. $L$ is {\bf located} if the
distance $d(x,L)$ exists for each $x$.
\end {definition}

For $L$ possibly not inhabited, we have the corresponding notion
using the ${\rm glb}$:

\begin {definition} \cite {LR}
For $L$ a subset of a metric space, the {\bf quasi-distance}
$\delta(x,L)$ from a point $x$ in the space to $L$ is ${\rm
glb}_{y \in L}d(x,y)$, where $d$ is the metric in the space. $L$
is {\bf quasi-located} if the quasi-distance $\delta(x,L)$ exists
for each $x$.
\end {definition}

For $L$ the root set of a polynomial, $L$ may not be inhabited,
but it is quasi-located \cite {LR}. Furthermore, it is shown in
the same paper that, for $D$ a closed disc containing a
quasi-located set $S$, the set of uniformly continuous functions
on $S$ that extend to uniformly continuous functions on $D$ is a
Riesz space $V$; the reader is reminded of the various Stone-like
representation theorems, by which every Riesz space is isomorphic
to a space of functions on a (non-empty) set, under certain
assumptions, such as Excluded Middle or Dependent Choice
\cite{F,CS}. So while the root set of some polynomial might not be
inhabited (i.e. have an element), it can be dealt with like an
inhabited set in many ways, via its quasi-distance function and
its Riesz space, giving it some metric and topological structure.
Finally, while the root set might not be inhabited, it is also not
empty \cite{LR}.

While we're on the subject of Riesz spaces, we'd like to address
one of their less discussed properties. In \cite{CS}, a
constructive Stone-Yosida representation theorem is proven: under
DC, if $R$ is a separable and normable Riesz space, then $R$ is
(isomorphic to) a set of continuous functions on a (Heine-Borel)
compact (i.e. complete and totally bounded) metric space. $R$ is
{\em normable} if it contains a strong unit 1, inducing an
embedding of $\mathbb{Q}$ into $R$, and, for each $f\in R,\;
U(f):=\{q\in\mathbb{Q}|q>f\}$ is a real number (i.e. a located
upper cut in the rationals), in which case $U$ provides a
pseudo-norm on $R$, which is a norm if $R$ is Archimedean. It is
not necessary for $R$ to be normable in order to be a function
space: just take $R$ to be the set of functions on \{0\} $\cup$
\{1 $\mid$ P\}, where P is some proposition; then the normability
of $R$ is equivalent to the decidability of P. But for $R$ to be a
nice function space on a nice set, some such hypothesis is
necessary. For instance:
\begin{prop}
If $R$ is a set of uniformly continuous functions on a compact
metric space, then $R$ is normable.
\end{prop}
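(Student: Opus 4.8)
\emph{The plan is to} read the hypothesis as: $R$ is a Riesz space realised faithfully as a collection of uniformly continuous functions on a compact (that is, complete and totally bounded) metric space $X$, with strong unit the constant function $\mathbf 1$, so that the map $q\mapsto q\mathbf 1$ is the required embedding of $\mathbb Q$ (injective, since $X$ is inhabited; the degenerate empty case makes $R$ trivial and can be set aside). Under this identification the order-unit relation $q>f$ unwinds to: there is a rational $\varepsilon>0$ with $f(x)\le q-\varepsilon$ for every $x\in X$. Hence $U(f)=\{q\in\mathbb Q\mid \exists\varepsilon\in\mathbb Q_{>0}\,\forall x\in X\;f(x)\le q-\varepsilon\}$, which is precisely the upper cut of the putative supremum $\sup_{x\in X}f(x)$. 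So the whole content reduces to the fact — classical, and due to Bishop in this constructive setting — that a uniformly continuous function on a compact metric space has a located supremum, plus the routine checks that $U(f)$ is inhabited, bounded below, upward closed and downward open.

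\emph{Next I would} fix a modulus of uniform continuity $\delta$ for $f$, so that $d(x,y)\le\delta(\eta)$ implies $|f(x)-f(y)|\le\eta$, and use total boundedness to produce, for a given rational $\eta>0$, a finite (and, since $X$ is inhabited, nonempty) $\delta(\eta)$-net $x_1,\dots,x_k$. For inhabitedness of $U(f)$: take a $\delta(1)$-net and a rational $q$ above $\max_i f(x_i)+2$; then $f(x)\le\max_i f(x_i)+1<q-1$ for all $x$, so $q\in U(f)$. For boundedness below: for any $x_0\in X$ and rational $p<f(x_0)$ one has $p\notin U(f)$, since $p\in U(f)$ would force $f(x_0)\le p-\varepsilon<f(x_0)$. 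Upward closure and downward openness are immediate.

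\emph{The one step needing genuine care is locatedness}: given rationals $p<q$ I must show $q\in U(f)$ or $p\notin U(f)$. Put $r=q-p$, take a $\delta(r/4)$-net $x_1,\dots,x_k$, and let $M=\max_i f(x_i)$. Since $(q-r/4)-(p+r/4)=r/2>0$, cotransitivity of $<$ on $\mathbb R$ yields $M<q-r/4$ or $M>p+r/4$. If $M<q-r/4$: every $x$ lies within $\delta(r/4)$ of some $x_i$, so $f(x)\le f(x_i)+r/4\le M+r/4$, and since $M+r/4<q$ there is a rational $\varepsilon>0$ with $M+r/4<q-\varepsilon$; hence $f(x)\le q-\varepsilon$ for all $x$ and $q\in U(f)$. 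If $M>p+r/4$: then $\max_i f(x_i)>p$ forces $f(x_i)>p$ for some particular $i$ (finite maxima behave well constructively here), so $f$ exceeds $p$ at a point of $X$, which is incompatible with $p\in U(f)$; hence $p\notin U(f)$. Either way the disjunction holds, so $U(f)$ is a located upper cut, i.e.\ a real number, and $R$ is normable.

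\emph{I expect the main obstacle} to be constructive hygiene rather than mathematical depth: one must not "choose, for every $n$, a $\tfrac1n$-net", which would be an illegitimate use of Countable Choice, but instead invoke total boundedness only finitely often — one net at a time within each sub-argument — and one must route every case split through cotransitivity rather than trichotomy or excluded middle. A secondary point is simply committing to the correct order-unit reading of $q>f$ in $R$ and dispatching the harmless edge behaviour when $X$ is empty.
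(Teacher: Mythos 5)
Your proof is correct, and it is essentially the argument the paper has in mind: the paper leaves this as an exercise with pointers to Bishop and Bridges--Vita, and what you supply is exactly that standard result --- normability reduces to the locatedness of $\sup f$ for a uniformly continuous $f$ on a totally bounded space, proved choice-free with a single finite net per instance of locatedness and cotransitivity in place of trichotomy. Your reading of $q>f$ via the strong unit (some rational $\varepsilon>0$ with $f\le q-\varepsilon$ uniformly) is the intended one, and your net argument is robust to the minor variations in how that relation might be phrased.
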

The very simple proof is left as an exercise, and also follows easily
from results in \cite{B} or \cite{BV}. The purpose of this discussion
is to justify the assumption of normability in \cite{CS}, and the
effort spent proving normability in \cite{LR} and in the next section
here.

At this point, we are ready for the construction. Take the
standard topological model over $\mathbb{C}$. For those unfamiliar
with topological models, this can be viewed as the standard sheaf
model over $\mathbb{C}$, or as a Heyting-valued model over the
Heyting algebra of the open subsets of $\mathbb{C}$. As is
standard, $\mathbb{C}\Vdash$ IZF \cite{FH,G,L,MM,TvD}.
\begin{thm}
(Fourman-Hyland \cite{FH}) $\mathbb{C}\Vdash ``$Not every
polynomial has a root.$"$
\end{thm}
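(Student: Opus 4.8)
The claim is that in the topological model over $\mathbb{C}$ (the sheaf model, equivalently the Heyting-valued model over the Heyting algebra $\mathcal{O}(\mathbb{C})$ of open subsets of $\mathbb{C}$), it is forced that not every polynomial over $\mathbb{C}$ has a root. The natural witness is the polynomial $p(X) = X^2 - a$ where $a$ is the "generic" complex number — i.e., the name whose interpretation over an open set $U$ is the identity function $U \hookrightarrow \mathbb{C}$ (or more precisely, the element of the model corresponding to the inclusion of opens, so that $a$ "is" the coordinate function $z$). I want to show $\mathbb{C} \Vdash \neg \forall$ (polynomials have roots), which unwinds to: the open set of conditions forcing "$X^2 - a$ has a root" is empty, equivalently, no nonempty open $U$ forces "$\exists w\, (w^2 = a)$."

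The key step is the semantic analysis of existential quantifiers in topological models: a nonempty open $U$ forces "$\exists w\, (w^2 = a)$" iff $U$ can be covered by opens $U_i$ on each of which there is an actual element $w_i$ of the model (a continuous $\mathbb{C}$-valued function on $U_i$, since elements of $\mathbb{C}$ in the model over $U_i$ are exactly the continuous maps $U_i \to \mathbb{C}$) with $U_i \Vdash w_i^2 = a$, i.e. $w_i(z)^2 = z$ for all $z \in U_i$. So it suffices to show: there is no open cover of any nonempty open set (equivalently, of some open disc around $0$, by a density/genericity argument — the generic point can be taken to range over a neighborhood of $0$) on whose pieces a continuous square-root function exists. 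Concretely, I would argue: if $0 \in U$ and $U = \bigcup_i U_i$ with continuous $w_i : U_i \to \mathbb{C}$, $w_i(z)^2 = z$, then $0 \in U_{i_0}$ for some $i_0$, so $U_{i_0}$ is a neighborhood of $0$ carrying a continuous square root; but this is the classical topological obstruction — a continuous square root on a punctured neighborhood of $0$ contradicts the nontriviality of the monodromy around $0$ (equivalently, $\sqrt{\ }$ is two-valued and the two branches cannot be separated continuously across any loop encircling $0$). This classical fact (no continuous branch of $\sqrt z$ near $0$) is what makes the whole argument go.

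So the steps, in order, are: (1) recall/set up the forcing semantics for $\mathcal{O}(\mathbb{C})$, in particular that terms of sort $\mathbb{C}$ over an open $U$ are continuous functions $U \to \mathbb{C}$, and the cover-semantics clause for $\exists$; (2) fix the generic coefficient $a$ as the coordinate function and the candidate polynomial $X^2 - a$, noting it is a genuine name for a polynomial in the model; (3) suppose toward a contradiction that some nonempty open $U$ forces "$X^2 - a$ has a root," extract an open cover $\{U_i\}$ of $U$ and continuous $w_i$ on $U_i$ with $w_i^2 = \mathrm{id}$; (4) pass to a member $U_{i_0}$ of the cover containing $0$ (such a member exists provided $0 \in U$, and one reduces to this case since it is forced that \emph{some} complex number — in particular one arbitrarily close to $0$ — has no square root, or simply because the set of opens forcing the statement would have to be dense and hence meet every neighborhood of $0$); (5) invoke the classical nonexistence of a continuous square root near $0$ to get the contradiction; (6) conclude that no nonempty open forces the statement, i.e. $\mathbb{C} \Vdash \neg\,$"every polynomial has a root."

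The main obstacle — or at least the point needing the most care — is step (4), the localization at $0$: one must make sure that the genericity of $a$ really does force the witnessing point $0$ (or a nearby point with the same obstruction) into play, rather than allowing the model to quietly restrict attention to opens bounded away from $0$ where square roots do exist. The clean way to handle this is to observe that $\neg\forall$ is forced by $\mathbb{C}$ iff the largest open forcing the universal statement is empty, and the universal statement "every polynomial has a root" in particular entails "$X^2 - z$ has a root" locally everywhere; any open forcing this must be covered by opens carrying continuous square roots of the coordinate, and no open neighborhood of $0$ can be so covered, so the forcing open cannot contain $0$ and hence is not all of $\mathbb{C}$ — and a fortiori, by considering translated polynomials $X^2 - (z - c)$ for $c$ ranging over $\mathbb{C}$, it cannot be any nonempty open at all. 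The rest is routine unwinding of the topological-model semantics.
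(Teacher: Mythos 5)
Your proposal is correct, but it routes the key step through different machinery than the paper does. Where you quote two standard sheaf-semantic facts -- the cover interpretation of $\exists$ over an open $U$, and the identification of the internal (Dedekind) complex numbers defined over $U$ with continuous functions $U\to\mathbb{C}$ -- and then hit the classical monodromy obstruction (no continuous branch of $\sqrt{z}$ on any neighborhood of $0$), the paper never invokes the representation of internal complex numbers as continuous functions. Instead it argues by hand from the forcing clauses alone: assuming $0\in O\Vdash z^{2}=G$, it takes a circle of radius $\epsilon^{2}$ inside $O$, notes that each point $w$ of the circle has a neighborhood forcing the root into an open set of diameter less than $\epsilon$, observes that such a set pins down exactly one of the two square roots of $w$ (they are $2\epsilon$ apart) and that this selection is consistent on overlaps, and so obtains a continuous single-valued square-root selection on the circle -- contradicting the same classical fact you use, but needing only approximate localization of the would-be root rather than the full structure theorem. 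Your version is shorter and more conceptual, at the cost of importing the nontrivial theorem that the Dedekind reals/complexes in the sheaf model over $\mathbb{C}$ are the sheaf of continuous functions (which is legitimate -- it is in Fourman--Hyland -- but it is precisely the content your argument outsources); the paper's version is self-contained at the level of basic forcing clauses. The final reduction is the same in both: translate, i.e.\ consider $X^{2}-(G-v)$, to conclude that no nonempty open forces ``every polynomial has a root,'' hence $\mathbb{C}$ forces its negation. One small caution: state explicitly that the internal $\mathbb{C}$ being quantified over is the Dedekind complexes, since that is what the continuous-function description applies to.
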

\begin{proof}
Let $G$ be the generic complex number. $G$ is characterized by the
relation $O\Vdash G\in O$ for any open set $O$. We claim that no
neighborhood of 0 forces that anything is a root of $X^{2}-G.$
Suppose to the contrary that $0\in O\Vdash z^{2}=G.$ $O$ contains
all circles with center 0 of sufficiently small radius. Consider
the circle in $O$ centered at 0 of radius $\epsilon^{2}$. Each
point $w$ of the circle is contained in a small open set $O_{w}$
forcing $z$ to be in an open set $U_{w}$ of diameter less than
$\epsilon$. $U_{w}$ must contain a square root of $w$, and each
square root of $w$ has absolute value $\epsilon$, so they are a
distance of $2\epsilon$ apart. Hence $U_{w}$ contains exactly one
such square root. Furthermore, for any other such neighborhood
$O_{w}'$ of $w$ (i.e. one determining $z$ to be in some $U_{w}'$
of diameter less than $\epsilon$), $U_{w}$ and $U_{w}'$ contain
the same square root of $w$. (If not, then $\emptyset\neq
O_{w}\cap O_{w}'\Vdash z\in U_{w}\cap U_{w}'=\emptyset.$) Notice
that the square root of $w$ so determined is a continuous function
of $w$: by choosing $O_{w}$ to be sufficiently small, $U_{w}$ can
be made arbitrarily small, and the values of the square root
function on the elements of $O_{w}$ can be limited to an
arbitrarily small arc. This, however, is a contradiction, as there
is no continuous single-valued square root function on a circle
around the origin.

This argument shows that $\mathbb{C}\not\Vdash``X^{2}-G$ has a
root.$"$ In contrast, it is not hard to see that
$\mathbb{C}-\{0\}\Vdash``X^{2}-G$ has a root.$"$ However, given
any open set, it can be translated to contain the origin, to come
up with a similar example. That is, for any $v\in\mathbb{C}$,
arguments similar to the ones above will show that no neighborhood
$v$ can force {}``$X^{2} - (G-v)$ has a root.$"$ So no non-empty
open set forces {}``every polynomial has a root.$"$ So
$\mathbb{C}\Vdash$ {}``Not every polynomial has a root.$"$
\end{proof}

\section{A Set of Complex Numbers with No Members}

In the previous section, we identified a family of subsets of
$\mathbb{C}$ which are all nonempty but not all inhabited. In this
section, we present a subset of $\mathbb{C}$ which is not
inhabited but still has a nontrivial distance function and
nontrivial Riesz space.

Let the topological space $F$ consist of all finite, non-empty
\footnote {If we allowed the empty set $\emptyset$ as a member of
$F$, under the given topology $\{\emptyset\}$ would be clopen, and
the model over $F$ would then split into two separate models: the
part over $\{\emptyset\}$, which is just $V$, and the part we're
interested in.} subsets of $\mathbb{C}$ . (Since we are dealing
only with the topological and metric properties here, you can just
as well think of taking $\mathbb{R}{}^{2}$ instead.) The topology
is that induced by the Hausdorff metric, which is also known as
the Vietoris topology. We give a self-contained description.
Intuitively, a basic open set $U$ is determined by finitely many
pieces of information. Information is either positive or negative.
A positive piece of information is an open set $O$ of
$\mathbb{C}$, and holds of $A\in F$ iff $A$ contains a point of
$O$. A negative piece of information is an open set $N$ of
$\mathbb{C}$, and holds of $A\in F$ if and only if $A \subseteq
N$. Notice that the finitely many pieces of negative information
can be combined into one piece, by taking the intersection of the
determining open sets, so we will assume that any basic open $U$
has only one piece of negative information. A point $A\in F$ is in
an open set $U$ if and only if $A$ satisfies all of the
information determining $U$. This can be summarized in the
following:

\begin {definition}
A basic open set $U_{\{ O_i \}_{i \in I}, N}$ is given by a
collection of open sets $O_i$ indexed over a finite set $I$ and an
open set $N$. $A \in U_{\{ O_i \}_{i \in I}, N}$ iff for each $i
\in I \; A \cap O_i \not = \emptyset$ and $A \subseteq N$.
\end {definition}

Since the information determining $U$ can be of any finite size,
such sets $U$ are a basis, and so determine a topology on $F$. The
model desired is the full topological model built on $F$. (As
above, for background on topological models, see any of
\cite{FH,G,L,MM,TvD}.)

An open set $U$ is in \emph{normal form} if its determining open
set $N$ is the union of the determining open sets $O_i$. Observe
that the open sets in normal form are a basis for the topology on
$F$.

In general, just like with classical forcing, the standard
topological model over a space $T$ introduces a generic $G$,
determined by the relation $O \Vdash G \in O$. In this model, $T$
is $F$ the finite subsets of $\mathbb C$, and so the canonical
generic, call it $H$, could be viewed as a subset of $\mathbb{C}$,
defined via the relation $U \Vdash H\subseteq\bigcup_i O_i$, where
$U$ is in normal form. (More generally, $U \Vdash H \subseteq N$,
where $N$ is the negative information in $U$, $U$ any basic open
set.) $H$ is indeed the set we want.
\begin{thm}
$F \Vdash ``H$ does not contain any points.$"$\end{thm}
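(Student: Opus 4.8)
The plan is to show that no nonempty open subset of $F$ forces ``$z\in H$'' for any name $z$ of a complex number. As usual in a topological model, an existential statement $\exists z\,\varphi(z)$ is true exactly on the union, taken over all names $z$, of the open sets on which $\varphi(z)$ is true; so this suffices to conclude that $F$ forces that no $z$ belongs to $H$, which is precisely the statement that $H$ contains no points. Suppose then, toward a contradiction, that a nonempty basic open set $U$ forces $z\in H$ for some complex name $z$. Shrinking $U$ (the normal form basic opens are a basis), we may take $U=U_{\{O_i\}_{i\in I},N}$ in normal form, so that $N=\bigcup_{i\in I}O_i$. Since $U\Vdash H\subseteq N$ and $U\Vdash z\in H$, we also get $U\Vdash z\in N$.

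The idea is to localize $z$ into a small open disc and then push the generic set $H$ entirely off the closure of that disc, using two facts: every nonempty open subset of $\mathbb{C}$ contains a ball of positive radius, and so cannot be contained in a sufficiently small closed disc; and membership of a complex number commutes with unions, so a condition forcing $z$ into an open set can be refined to one forcing $z$ into a small sub-disc. Concretely, since $I$ is finite, fix $\rho>0$ so small that no closed disc of radius $\rho$ is contained in any $O_i$ (hence none is contained in $N=\bigcup_i O_i$ either). Cover $N$ by open discs of radius $<\rho$ whose closures lie in $N$, and refine $U$ to a nonempty normal form basic open $V=U_{\{P_j\}_{j\in J},N_V}$ with $V\subseteq U$ and $V\Vdash z\in D$ for one such disc $D$.

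Granting for the moment that each $P_j$ is also un-swallowable by $\overline{D}$, that is $P_j\not\subseteq\overline{D}$, one finishes as follows. For each $j$ pick a nonempty open $Q_j\subseteq P_j\setminus\overline{D}$, and set $N_W:=N_V\setminus\overline{D}$, which is a nonempty open set (as $N_V=\bigcup_j P_j$ is not contained in $\overline{D}$) disjoint from $\overline{D}$. Let $W:=U_{\{Q_j\}_{j\in J},N_W}$. Then $W$ is a nonempty basic open set, since a finite set taking one point from each $Q_j$ lies in it. Moreover $W\subseteq V\subseteq U$, so $W$ inherits $W\Vdash z\in D$ and $W\Vdash z\in H$, while by the defining property of the generic, $W\Vdash H\subseteq N_W$. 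Hence $W\Vdash z\in D\cap N_W$; but $D\cap N_W=\emptyset$, so $W$ forces $z$ to lie in the empty set. This is impossible, since $W$ is nonempty and only $\emptyset$ forces a falsehood. Contradiction.

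I expect the main obstacle to be exactly the point granted above: the refinement localizing $z$ into a small disc may introduce new positive-information pieces $P_j$, and a priori nothing keeps these large relative to $\overline{D}$. Handling this is where the Vietoris/Hausdorff structure of $F$ is genuinely used — intuitively, a positive-information piece forced to be arbitrarily small would force $H$ to meet arbitrarily small sets, which is in tension with $z$ being forced simultaneously into $H$ and into that region — and making this precise, whether by a sufficiently careful choice of the refining condition or by a controlled iteration of the localization (each step driving $z$ into a smaller disc while keeping track of the finitely many positive-information pieces that appear), is the technical heart of the proof.
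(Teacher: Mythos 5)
The gap you flag in your last paragraph is not a removable technicality; it is the entire content of the theorem, and your own construction shows that no ``sufficiently careful choice of the refining condition'' can close it. Run your argument contrapositively: if $V=U_{\{P_j\}_{j\in J},N_V}\le U$ forces $z$ into the small disc $D$ and no $P_j$ is contained in $\overline{D}$, then your $W$ exists and already gives the contradiction; hence any condition below $U$ that localizes $z$ into $D$ must carry a positive piece $P_{j_0}\subseteq\overline{D}$. That is exactly the case your construction cannot handle ($P_{j_0}\setminus\overline{D}=\emptyset$), and it is the typical case: the only way a condition can know that $z$ lies in $D$ is by pinning a point of the generic set inside $D$ for $z$ to track. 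Moreover there is no contradiction lurking in that situation by itself --- the generic $H$ is forced to meet arbitrarily small sets below suitable conditions, just as a Cohen real has its digits decided to arbitrary precision --- so the ``tension'' you appeal to is illusory, and iterating the localization merely reproduces the same picture at a smaller scale. (A small slip as well: you want $\rho$ so small that no $O_i$ is contained in a closed disc of radius $\rho$; as written, ``no closed disc of radius $\rho$ is contained in any $O_i$'' is impossible for any $\rho>0$.)

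What is missing is a reason why no definable complex number can track a single point of the generic finite set, and that reason is topological, not combinatorial. The paper's proof supplies it: after shrinking to a condition $W\ni A$ in normal form that decides $z$ (there called $X$) to within half the separation of $A$'s points, $z$ is pinned near the unique point $z_0$ of $A$ in one positive piece $O'_j$ --- essentially the pinning fact your argument does establish. Then one places a small closed disc inside $O'_j$ with $z_0$ on its boundary, replaces $z_0$ by the antipodal pair $z_\theta,z_{\theta+\pi}$ rotated by $\theta\in[0,\pi]$, and observes that a neighborhood of each resulting condition $B_\theta$ forces $z$ to be near exactly one of the two antipodal points; the resulting selection $f(\theta)$ is continuous, so $f(0)=0$ forces $f(\pi)=\pi$, while $B_\pi=B_0$ forces $f(\pi)=f(0)=0$. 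This is the same ``no continuous single-valued square root on a circle'' obstruction as in Theorem 6, and it is the heart of the proof; your proposal contains no analogue of it, so as it stands it proves only the pinning fact, not the theorem.
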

\begin{proof}
Suppose to the contrary $U\Vdash X\in H$. Let $A\in U$. Say that
$A$ has $n$ elements. Shrink $U$ to an open $V = V_{\{ O_i
\}_{i<n}, N}$ containing $A$ such that the $n$-many opens $\{ O_i
\}_{i<n}$, each necessarily containing exactly one point of $A$,
are at least a distance $\delta>0$ apart from each other \footnote
{This brings up a subtle point in the meta-theory. Classically
there would be no trouble covering $A$ with open sets, one per
member of $A$. Constructively you might not be sure, as equality
on $\mathbb C$ is not assumed to be decidable. However, we defined
$F$ to consist of finite sets. A finite set is one which is
bijectible with a natural number. If we're considering a finite
set $A = \{a_0, a_1, ... , a_{n-1}\}$, then all of those numbers
are unequal to one another. The more general concept, which allows
for some members to be equal, is that of {\em finite
enumerability}.}. Shrink $V$ to $W \ni A$ in normal form forcing
$X$ to be within $\delta/2$ of some point with rational
coordinates. Note that $W$ has the form $W_{\{ O_{i}^{'} \}_{i<n},
N^{'}}$, that is, $W$ is determined by $n$-many open sets, and
that $W$ forces $X$ to be in a fixed one of the $O'_i$'s, say
$O'_j$. Let $z_0$ be the unique point in $A \cap O'_j$.

There is an $\iota > 0$ so small that the closed disc of radius
$\iota$ having $z_0$ as its right-most point is contained within
$O'_j$. As the right-most point, $z_0$ has angular measure 0, when
measuring angles the standard way. Now choose $B_0 \in W$ to agree
with $A$ except that from $O'_j$ $B_0$ has in addition to $z_0$
also the point diametrically opposite $z_0$ in this disc, $z_\pi$.
For any angle $\theta$ between 0 and $\pi$ inclusively let
$B_\theta$ be just like $B_0$ except the points $z_0, z_\pi$ have
been rotated around the disc by angle $\theta$ to $z_\theta,
z_{\theta + \pi}$. Around each $B_\theta$ is an open set forcing
$X$ to be closer to (say within some $\epsilon > 0$, small
relative to $\iota$, of) either $z_\theta$ or $z_{\theta + \pi}$.
Moreover, the choice between $\theta$ and $\theta + \pi$ is
unique, as any two such open sets are compatible, both containing
$B_\theta$. Let $f$ be the function so determined: the domain is
$[0, \pi]$, and some neighborhood of $B_\theta$ forces $X$ to be
closer to $z_{f(\theta)}$ than to $z_{f(\theta) + \pi}$. Without
loss of generality, say $f(0) = 0$. Notice that $f$ is continuous:
if $B_\theta \in W_\theta \Vdash ``X$ is closer to $z_\theta$
(resp. $z_{\theta + \pi})"$, then for $\phi$ sufficiently close to
$\theta$ $B_\phi$ is also in $W_\theta$, which would then force
$X$ to be closer to $z_\phi$ (resp. $z_{\phi + \pi}$). Hence by
continuity and $f(0) = 0$, $f$ is the identity function, and so
$f(\pi) = \pi$. But this can't happen, because $B_\pi = B_0$, so
$f(\pi)$ must equal $f(0)$ which is 0.

By this contradiction, no neighborhood $U$ can force any $X$ to be
in $H$.
\end{proof}
In a very real sense, $H$ is just the empty set. But it's really
not. Consider distance. Let $U$ be in normal form, with each
determining positive open set of diameter less than $\epsilon$.
Intuitively, the distance from $z\in\mathbb{C}$ to $H$ is between
$\rho={\rm dist}(z,\bigcup \{O_i\}),\,\{O_i\}$ the positive
information in $U$, and $\rho+\epsilon$. We could just give this
as the definition of distance and be done with it. Instead, we
would rather put this in a broader context. For that, we will
first need to develop the Riesz space aspect, and then return to
the discussion of distance.

To phrase it imprecisely but hopefully suggestively, the Riesz
space desired is the one generated by what would be the
projections of $H$ onto the $x$ (real) and $y$ (imaginary) axes,
if $H$ did have any points. To phrase it precisely, working in the
ambient (i.e. external) model, let $R$ be the Riesz space of
functions from $\mathbb{C}$ to $\mathbb{R}$ generated by the three
functions $1$ (the constant function with value $1$), $x$
(projection onto the real part), and $y$ (projection onto the
imaginary part). Each element of $R$ can be thought of as a word
in the language of Riesz spaces, an expression built from the
generators using vector addition, scalar multiplication, and the
lattice operations; or better: an equivalence class of such words,
mod functional equality. The internal Riesz space in the
topological model, ambiguously also called $R$, consists of those
same functions, or words. Given any $r,s\in R,$ $U\Vdash r=s$ iff,
for $N$ the negative information determining $U$ and $z\in N,\:
r(z)=s(z)$. The Riesz space structure is inherited from the
external $R$, and the equality axioms are clearly satisfied. There
remains only one fact to check.
\begin{prop}
$F\Vdash R$ is normable.\end{prop}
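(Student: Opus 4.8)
The plan is to unwind the definition of \emph{normable} and to force each of its parts: that the constant function $1$ is a strong unit inducing an embedding of $\mathbb{Q}$ into $R$, and --- the substantive part --- that for each $f\in R$ the upper cut $U(f)=\{q\in\mathbb{Q}\mid q>f\}$ is located, i.e.\ a real number.

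The first part is light. The map $q\mapsto q\cdot 1$ is injective because any nonempty basic open $W_{\{O_i\},N}$ has $N\neq\emptyset$, and equality in the internal $R$ at a condition is agreement of functions on the negative information $N$, so $q\cdot 1=q'\cdot 1$ is forced only if $q=q'$ (and the model's rationals are the standard ones). That $1$ is a strong unit, $\forall f\in R\,\exists n\,(-n\cdot 1\le f\le n\cdot 1)$, is where boundedness matters: the basic opens in normal form all of whose determining opens are bounded are dense (shrink a Vietoris basic neighbourhood of a given finite set), below such a condition every $f\in R$ --- a word in $1,x,y$, hence continuous on $\mathbb{C}$ --- is bounded on $N$, and density suffices to force the existential.

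For the second part the key point is that every $f\in R$ is Lipschitz on $\mathbb{C}$: this holds for the generators ($1$ constant, $x$ and $y$ being $1$-Lipschitz) and is preserved by addition, scalar multiplication and $\vee,\wedge$; fix a Lipschitz constant $L_f$. I would then introduce the external function $\rho_f\colon F\to\mathbb{R}$, $\rho_f(A)=\max_{z\in A}f(z)$, which is totally defined since each $A\in F$ is finite and nonempty, and show it continuous --- indeed $L_f$-Lipschitz for the Hausdorff metric: around $A$, the basic open whose positive opens are the $\delta$-balls about the points of $A$ and whose negative information is their union confines $\rho_f$ to within $L_f\delta$ of $\rho_f(A)$. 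Thus $\rho_f$ names a real number $\widetilde{\rho_f}$ of the model (continuous real-valued functions on opens of $F$ being exactly the Dedekind reals of a topological model). The target is then $F\Vdash U(f)=\{q\in\mathbb{Q}\mid q>\widetilde{\rho_f}\}$, which displays $U(f)$ as the upper cut of a genuine real and, with the first part, finishes the proposition.

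That identity is checked condition by condition, reducing a given basic open to normal form $W=W_{\{O_i\}_{i\in I},N}$ with $N=\bigcup_i O_i$. On one side, unwinding ``$q>f$'' in the internal Riesz space $R$ (equality in $R$ at $W$ being agreement on $N$, and $f$ being dominated below $q\cdot 1$ by the strong unit amounting to $q\cdot 1-f\ge (1/n)\cdot 1$ for some $n$) gives $W\Vdash q\in U(f)$ precisely when $q>\sup_{z\in N}f(z)$. On the other, $\sup_{A\in W}\rho_f(A)=\sup_{z\in N}f(z)$ --- any $A\in W$ has $A\subseteq N$, so $\rho_f(A)\le\sup_N f$, while each $z\in N=\bigcup_i O_i$ can be adjoined to a choice of one point per $O_i$ to yield an $A\in W$ with $\rho_f(A)\ge f(z)$ --- and the sheaf semantics of ``$q>\widetilde{\rho_f}$'' give $W\Vdash q>\widetilde{\rho_f}$ precisely when $q>\sup_{A\in W}\rho_f(A)$; so the two upper cuts agree at every $W$, hence internally. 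I expect this final matching to be the main obstacle: the geometry (Lipschitzness of $\rho_f$) is routine, but one must carefully pin down the internal strict order of the function algebra $R$ from its ``agreement on $N$'' equality (including strict-versus-nonstrict and the rounding of the cut), the sheaf-theoretic meaning of an inequality between a rational and the externally defined continuous function $\rho_f$, and the combinatorial fact that in normal form every point of the negative information completes to an admissible finite set of $W$.
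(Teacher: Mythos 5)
Your route is genuinely different from the paper's and, apart from one fixable mis-statement, it works. The paper never names the real explicitly: it proves locatedness of the cut directly, showing that around any $A\in F$ and any $\epsilon>0$ there is a normal-form neighbourhood of $A$ (positive information a disjoint cover of $A$ by opens on which $f$ varies by less than $\epsilon$) forcing $\sup(f)$ into an interval of length less than $\epsilon$, and density of such conditions gives normability. You instead realize the cut as the upper cut of an explicit Dedekind real of the model, namely $\widetilde{\rho_f}$ for the continuous map $A\mapsto\max_{z\in A}f(z)$ on $F$. That is longer but more informative (it fits well with the Addendum's identification of the generic with the distance/Riesz data rather than with a set of points), and you also check the strong-unit and $\mathbb{Q}$-embedding clauses, which the paper treats as routine; the geometric inputs (continuity/Lipschitzness of words in $1,x,y$, shrinking neighbourhoods, and the fact that every $z\in N$ extends to a member of $W$) are the same in both arguments.

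The slip is in the two ``precisely when'' claims, which fail exactly in the boundary case you yourself flagged (strict-versus-nonstrict), namely when the supremum is not attained. Take $f=x$, $q=1$, and $W$ in normal form whose single positive open is the open unit disc $N$. Then $f(z)<q$ for every $z\in N$, and covering $W$ by shrunken neighbourhoods of its points $A$ (finite subsets of $N$, so $\max_A f<q$) shows $W\Vdash q>f$ even though $q=\sup_{z\in N}f(z)$; likewise $\rho_f(A)<q$ for every $A\in W$, so $W\Vdash q>\widetilde{\rho_f}$ even though $q=\sup_{A\in W}\rho_f(A)$. The correct characterizations are pointwise: $W\Vdash q>f$ iff $f(z)<q$ for all $z\in N$ (your covering/shrinking argument together with the extension of any $z\in N$ to some $A\in W$ yields exactly this), and $W\Vdash q>\widetilde{\rho_f}$ iff $\rho_f(A)<q$ for all $A\in W$, since the truth value of that inequality is the open set $\{A\mid\rho_f(A)<q\}$. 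These corrected conditions coincide ($A\subseteq N$ in one direction, extension of points of $N$ to members of $W$ in the other), so your identity $U(f)=\{q\mid q>\widetilde{\rho_f}\}$, and with it the proposition, still goes through; just replace the suprema by the pointwise strict bounds.
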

\begin{proof}
Let $r\in R$ and $A\in F$. Notice that $r$ as a function
externally is continuous. Cover $A$ with disjoint open sets $O_i$
so that $r$ varies less than $\epsilon$ on each of them. Let $U$
be the open set of $F$ in normal form the positive information of
which is given by $\{O_i\}$. Then $U$ forces $\sup(r)$ to be
between $\sup_{z\in O_i, i \in I}\, r(z)$ on the upper end and
$\sup_i\, \inf_{z\in O_i}\, r(z)$ on the lower, a distance less
than $\epsilon$.
\end{proof}
In \cite{CS}, Coquand and Spitters asked whether it is possible to
construct a Riesz space homomorphism into $\mathbb{R}$ of a
discrete, countable Riesz space, where the Riesz space is a vector
space over $\mathbb{Q}$, or of a separable Riesz space, without
any Choice axiom. The example above shows this is not always
possible, except for the issue of $R$ being discrete, which it is
not: if $z$ is on the boundary of $\{z\,|\, r(z)=s(z)\}$ then no
neighborhood of any $A$ containing $z$ will decide whether $r=s$.

\begin {prop}
It is consistent with IZF that there is a countable Riesz space
over $\mathbb Q$ with no Riesz space homomorphism into $\mathbb
R$, and a separable Riesz space over $\mathbb R$ with no Riesz
space homomorphism into $\mathbb R$.
\end {prop}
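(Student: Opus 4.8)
The plan is to exhibit both required spaces inside the topological model over $F$ built above (which satisfies IZF, being a standard topological model). For the separable-over-$\mathbb{R}$ half I would take $R$ itself; for the countable-over-$\mathbb{Q}$ half, the Riesz subspace $R_{0}\subseteq R$ generated over $\mathbb{Q}$ --- \emph{not} over $\mathbb{R}$ --- by $1$, $x$, $y$. The routine points come first. $R_{0}$ is internally countable: the set of Riesz words in $1,x,y$ with rational coefficients is a countable ground-model set, it stays countable in the model, and $R_{0}$ is its internal quotient by the model's equality. $R_{0}$ is internally dense in $R$: at any node, after shrinking to a bounded piece of negative information $N$, every $\mathbb{R}$-word agrees on $N$ to within any $\epsilon>0$ with a $\mathbb{Q}$-word (perturb the scalars; vector addition and the lattice operations are nonexpansive for the sup-seminorm over $N$), so $R_{0}$ is norm-dense in $R$ and hence $R$ is separable. (One also checks that the proof that $F\Vdash R$ is normable applies verbatim to $R_{0}$.) The content is then the claim that neither $R$ nor $R_{0}$ admits, in the model, a Riesz space homomorphism into $\mathbb{R}$; since $1$ is a strong unit, a nonzero such homomorphism renormalizes to a unital one, so it suffices to rule out unital homomorphisms --- which is the notion relevant to Stone--Yosida anyway, and without restricting to nonzero (equivalently unital) homomorphisms the claim would be trivially false, the zero map being a Riesz homomorphism.

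For the heart of it, suppose that at a node $V$, taken in normal form and, after further refinement, with its positive-information opens small and pairwise far apart, $\phi\colon R\to\mathbb{R}$ is forced to be a unital Riesz homomorphism. Set $c:=\phi(x)+i\,\phi(y)$, which internally is a complex number over $V$, i.e.\ a continuous $\mathbb{C}$-valued function on $V$. Since $\phi$ commutes with the Riesz operations and is determined on the generators, $\phi(w)=w(c)$ for every word $w$, where $w$ is read as the corresponding piecewise-affine function $\mathbb{C}\to\mathbb{R}$ and $w(c)$ is the internal real $A\mapsto w(c(A))$. Being a genuine function in the model, $\phi$ respects the equality of $R$, which at a node $W$ identifies two words exactly when they agree on the negative information $N_{W}$. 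I would then test this against the tent-function words $\bigl(1-K(|x-s|\vee|y-t|)\bigr)^{+}$ --- honest Riesz words in $1,x,y$, with $s,t,K$ rational if one wants to stay inside $R_{0}$ --- whose support is an arbitrarily small box around $s+it$: this forces $c(A)\in\overline{N_{W}}$ for every refinement $W\ni A$ of $V$, and since the negative information about a strictly finite set $A$ can be taken to be an arbitrarily small neighbourhood of $A$, it follows that $c(A)\in A$ for every $A\in V$. So $c$ is a forced continuous selection of a point of the generic finite set, i.e.\ $V\Vdash c\in H$, contradicting the theorem that $H$ has no points in the model. Alternatively, avoiding the internal description of $H$, one replays the $B_{\theta}$-configuration from that theorem's proof directly against $c$: near $B_{\theta}$ one has $c(B_{\theta})\in\{z_{\theta},z_{\theta+\pi}\}$, these two being far apart, so the choice between them is continuous in $\theta$, hence constant; starting from, say, $c(B_{0})=z_{0}$ this gives $c(B_{\pi})=z_{\pi}\neq z_{0}$, whereas $B_{\pi}=B_{0}$ forces $c(B_{\pi})=c(B_{0})=z_{0}$. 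The argument for $R_{0}$ is the same, with rational-coefficient words throughout.

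The step I expect to be the real obstacle is this last one --- passing from the purely algebraic ``$\phi$ is a unital Riesz homomorphism'' to the geometric ``$c$ is a point of $H$.'' Two places want care. First, one must verify carefully that respecting the node-local equality of $R$, which is far coarser than equality of words, really does force the representing point $c(A)$ down into $A$ via the tent functions; and here, exactly as in the footnote to the theorem above, the hypothesis that members of $F$ are finite in the strict (not merely finitely enumerable) sense is used, so that the points of $A$ are separated and the shrinking neighbourhoods of $A$ genuinely exist. Second, if one argues through $H$ rather than re-deriving the $B_{\theta}$ contradiction, one should make explicit why a forced continuous selection $A\mapsto c(A)\in A$ is literally an element of the generic $H$ --- implicit in the treatment of $H$ above, but the sort of thing a reader will want spelled out.
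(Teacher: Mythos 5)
Your proposal is correct and follows essentially the same route as the paper: both turn the putative (nonzero, hence after rescaling unital) homomorphism into the complex number $(\phi(x),\phi(y))$ and use compactly supported Riesz words --- your tent functions play exactly the role of the paper's rectangle elements $r_i$ --- to force that number into the small positive-information boxes, hence into $H$, contradicting the theorem that $H$ has no points. Your explicit treatment of the zero-homomorphism normalization and of internal countability/separability merely spells out what the paper leaves implicit.
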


\begin {proof}
In the example above, let $U_{\{ O_i \}_{i \in I}, N}$ be an open
set in normal form in which each $O_i$ is a rectangle with
rational sides, say $O_i = (x^i_{\min}, x^i_{\max}) \times
(y^i_{\min}, y^i_{\max})$. Let $r_i$ be the Riesz space element
$(x - x^i_{\min}) \wedge (x^i_{\max} - x) \wedge (y - y^i_{\min})
\wedge (y^i_{\max} - y)$. On $O_i \; r_i$ as a function is always
positive. So on $N \; \bigvee_i r_i > 0$.

If there were such a homomorphism $\sigma$ then $U_{\{ O_i \}_{i
\in I}, N} \Vdash ``\sigma(\bigvee_i r_i) > 0."$ Since a Riesz
space homomorphism must satisfy $\sigma(\bigvee_i r_i) = \bigvee_i
(\sigma(r_i))$, $U_{\{ O_i \}_{i \in I}, N}$ can be covered by
open sets, each one forcing $(\sigma(x), \sigma(y))$ into one of
the $O_i$'s. Hence $(\sigma(x), \sigma(y))$ is forced to be a
member of $H$. But we have seen that $H$ has no members. Hence
there is no such $\sigma$.

In the description of $R$ given above as being determined by
generators, the field over which it was generated was not
mentioned. If the field is taken to be the rationals, then $R$ is
countable, if the reals, then $R$ is separable, in both cases
because it is so externally.
\end {proof}

Using $R$, we can now deal with distance.

\begin {prop}
In the example above, there is a non-trivial distance function to
$H$. \end {prop}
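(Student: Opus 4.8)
The plan is to use the Riesz space $R$ as a surrogate for the coordinate functions on $H$, and to define the distance from a point $z \in \mathbb{C}$ to $H$ by extracting it from the forcing behaviour of $R$. Concretely, since $R$ is generated by $1$, $x$, and $y$, internally one expects $d(z, H)^2$ to behave like $\inf_{w \in H}\big((x(w) - \mathrm{Re}(z))^2 + (y(w) - \mathrm{Im}(z))^2\big)$, but as $H$ has no members this "infimum over $H$" must be read off the model rather than literally computed. First I would fix $z$ with rational coordinates (the general case following by continuity / density, exactly as normability was argued above), and for $U$ in normal form with positive information $\{O_i\}_{i \in I}$ I would define a candidate value $d_U := \mathrm{dist}\big(z, \bigcup_i O_i\big)$, the external Euclidean distance from $z$ to the union of the determining opens. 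The key computation is that if each $O_i$ has diameter less than $\epsilon$, then $U$ should force the distance from $z$ to $H$ to lie in the interval $[d_U, d_U + \epsilon]$: the lower bound because $H \subseteq N = \bigcup_i O_i$ is forced by $U$, and the upper bound because each $O_i$ is met by $H$, so $H$ has a point within $\epsilon$ of the nearest $O_i$ to $z$.

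The main steps, in order, are: (1) check that this assignment is coherent under refinement of open sets — if $V \leq U$ with finer positive information, the intervals $[d_V, d_V + \epsilon_V] \subseteq [d_U - \epsilon_U, d_U + \epsilon_U]$ nest appropriately, so that the forced value is well-defined as a real number (a located Dedekind cut) in the model; (2) verify that $U$ forces this number to be a lower bound for $\{\,\text{(distance from } z \text{ to } w) : w \in H\,\}$, which reduces to: if $W \leq U$ forces $w \in H$ for some term $w$, then $W$ forces $w \in N$, hence $|z - w| \geq \mathrm{dist}(z, N) \geq d_W$; (3) verify the approximation clause, that for every rational $\eta > 0$ some refinement forces the existence of — well, not a point of $H$, but rather forces, via the $r_i$ elements of $R$ used in the previous proposition, that the coordinate data witnessing "$H$ meets $O_i$" lies within $\eta$ of $z$; this is where the Riesz-space machinery substitutes for an actual point. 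Finally (4) observe the distance function is non-trivial: for $z$ far from all the positive information of some $U$, $U$ forces $d(z,H)$ to be large (bounded below by a positive rational), while translating as in Theorem 2 one also finds, for suitable $z$ and $U$, that $d(z, H)$ is forced small, so the function is not constantly $0$ and not constantly $\infty$; in particular no neighborhood forces $H = \emptyset$ in the metric sense.

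The hard part will be step (3), making precise that the quasi-distance approximation property holds even though $H$ is pointless. The naive reading "$\exists w \in H\ |z - w| < d(z,H) + \eta$" is false in the model, since $H$ is not inhabited; so the non-triviality we can actually prove is of the weaker, point-free flavour — $d(z, -)$ is a genuine $R$-definable real-valued function on $\mathbb{C}$ (equivalently an element of the completion of $R$ or a morphism witnessed through $R$), and it satisfies the lower-bound half of the $\mathrm{glb}$ clause together with the external estimate $d_U \leq d(z,H) \leq d_U + \epsilon$ on each $U$. I would therefore phrase the proposition's conclusion as: the map $z \mapsto d(z,H)$, defined locally by the forced intervals above, is a well-defined continuous function $\mathbb{C} \to \mathbb{R}$ in the model, agrees with $\delta(z, H)$ wherever the latter would be computed, takes positive values on suitable inputs, and is definable from the Riesz space $R$; the obstruction to a stronger statement is precisely the absence of points, which is the whole point of the example.
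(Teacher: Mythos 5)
Your construction is essentially the direct definition that the paper mentions just before introducing the Riesz space (``we could just give this as the definition of distance and be done with it'') and then deliberately sets aside. The paper's actual proof instead routes everything through $R$: for the taxicab metric it simply defines $d(z,H)=\inf(|x-x_z|+|y-y_z|)$, the infimum of a Riesz-space element existing as a real in the model by the normability proposition (Proposition 9), and for the Euclidean metric it first enlarges $R$ to be closed under squaring and sets $d(z,H)=\sqrt{\inf(|x-x_z|^{2}+|y-y_z|^{2})}$. So your route is legitimate and more hands-on: the forced intervals $[d_U,d_U+\epsilon]$, with $d_U=\mathrm{dist}(z,\bigcup_i O_i)$, by themselves already determine a well-defined, visibly non-constant real-valued function, with no Riesz-space machinery needed. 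What the paper's route buys is the identification of the distance function with data internal to $R$ (which is exactly what the addendum later exploits, identifying the generic with this distance function, equivalently with $R$), and a clean treatment of the choice of metric.

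Two of your steps need repair, though neither is fatal. In step (2), and in your justification of the upper estimate (``each $O_i$ is met by $H$, so $H$ has a point within $\epsilon$''), you argue via points of $H$; but Theorem 8 says precisely that no condition forces any term into $H$, so internally $H$ is extensionally empty: step (2) is vacuous and the quoted justification is unavailable. The correct argument is purely about conditions. For $V\subseteq U$ both in normal form with positive information $\{P_j\}$ and $\{O_i\}$, one checks (by building finite sets out of one point per $P_j$) that $\bigcup_j P_j\subseteq\bigcup_i O_i$ and that for each $i$ some $P_j\subseteq O_i$; this gives $d_U\le d_V\le d_U+\epsilon_U$, which is the estimate that actually holds (your stated nesting $[d_V,d_V+\epsilon_V]\subseteq[d_U-\epsilon_U,d_U+\epsilon_U]$ has the constants slightly off), and it suffices to define the located real. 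Finally, step (3) is unnecessary: since $H$ is pointless there is no glb/approximation clause to verify --- the proposition asks only for a non-trivial function playing the role of the distance, which your forced-interval real (or the paper's Riesz-space infimum) already is; your closing paragraph in effect concedes this, and it matches the paper's intent, where the ``distance to $H$'' is structure carried by the model rather than a quantity computed from members of $H$.
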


\begin {proof}
There is first the choice of which metric to use. While the
Euclidean metric is the most common, it is easier to do the
taxicab ($L^{1}$) metric, which we will do first. For a warm-up,
let's consider what the distance $d(0,H)$ from the origin $0$ to
the set $H$ should be. Classically, the distance would be
calculated between $0$ and each point $h \in H$
--- namely, $|x|+|y|$, where $x$ and $y$ are $h$'s coordinates ---
and the minimum would be taken over all such values. Observe that
this is exactly what the Riesz space is set up to do:
$d(0,H)=\inf(|x|+|y|)$. More generally, for $z = (x_z, y_z)$,
$d(z,H)=\inf(|x-x_z|+|y-y_z|)$.

To define Euclidean distance, we need to have squares available.
This calls for an expanded Riesz space. When generating $R$, close
not just under the Riesz space operations, but also squaring.
Everything else remains the same. Given that squares are
available, Euclidean distance can be defined as
$d(z,H)=\sqrt{\inf(|x-x_{z}|^{2}+|y-y_{z}|^{2}})$; notice that
there is no problem taking the square root, since it is the
non-negative root of a non-negative real number, and so always
exists.
\end {proof}

\begin{rem}
An interesting question is based on the observation that the
finiteness of the members $A\in F$ is not essential. The proof
given above is unchanged if we allow $A$ to be an arbitrary
compact subset of $\mathbb{C}$ (making allowances for the fact
that no open set is covered by opens of the form $U_{\{ O_i \}_{i
\in I}, N}$ in which the $O_i$'s are mutually disjoint -- in
general, overlaps of the open sets used as positive information
must be allowed). Then we're dealing with a different topological
space $E$, which is the completion of $F$. How do the models built
over $F$ and $E$ differ? Are they elementarily equivalent?
\end{rem}
$\,$
\begin{rem}
This construction brings up questions regarding dimension.
Distance was defined as in a two-dimensional space, and we knew to
do that because the original construction was based on a 2-D
space. How could the dimensionality of the missing underlying
space be recovered from the Riesz space alone? More generally,
under what circumstances can which properties of the invisible
underlying space be inferred from just the Riesz space?
\end {rem}
$\,$
\begin {rem}
This construction can be viewed as producing a set, the points of
which can have their $x$-coordinates determined, and their
$y$-coordinates too, just not both together. Is it possible to
build a 3-D model with no points in which you can determine any
two coordinates from among $x,y,$ and $z$ simultaneously, just not
all three?\end{rem}

\section {Addendum}

After the publication of the preceding, as a stand-alone article
\cite{L0}, I received some penetrating and well justified
questions from a colleague, making it clear that what is folklore
to some can be unclear to others. This note is to clarify some of
the background assumed in that first article. We will also take
this opportunity to correct some misleading and one mistaken
assertion in the same. (The mistake is in a comment; all the
results are correct.)

Topological models are just like the better known Boolean-valued
or forcing models, before modding out by ``not not.'' Hence it
might be useful to describe the situation in the context of
classical forcing first.

A set in any forcing extension is given by a term in the forcing
language of the ground model. A term is any set of the form $\{
\langle p_i, \sigma_i \rangle \mid i \in I\}$, where $p_i$ is a
forcing condition (a member of the forcing partial order
$\mathcal{P}$), $\sigma_i$ a term (inductively), and $I$ any index
set. The generic object $G$ is given by $\{ \langle p, \hat{p}
\rangle \mid p \in \mathcal{P}\}$, where $\hat{}$ is the embedding
of the ground model into the terms: inductively, $\hat{x} = \{
\langle \top, \hat{y} \rangle \mid y \in x\}$. $G$ is
characterized by the relation $p \Vdash ``\hat{p} \in G"$.

Often $G$ is identified with some other set easily definable from
$G$. Perhaps it would be easiest to illustrated what's going on
here via an example. Consider the simplest forcing partial order,
Cohen forcing, with conditions $2^{<\omega}$, finite binary
sequences. In a generic extension, $G$ is a set of finite binary
sequences. Typically, though, people work instead with $\bigcup
G$, which is an infinite binary sequence. To a set theorist,
$\bigcup G$ is easily definable from $G$, and in the other
direction $G$ is the set of proper initial segments of $\bigcup
G$. It's usually more convenient to work with $\bigcup G$, and so
set theorists take advantage of this simple inter-definability,
and, by abuse of language, refer to $\bigcup G$ as the generic
$G$.

To make the analogy with topological models tighter, the partial
order $2^{<\omega}$ is a basis for the Cantor space $2^\omega$.
The open set $O(p)$ corresponding to $p$ is the set of sequences
with $p$ as an initial segment. $p$ could be viewed as a name for
$O(p)$. Using open sets instead of the partial order, $G$ is then
defined as $\{ \langle O, \hat{O} \rangle \mid O$ is an open set
of $2^\omega\}$, and is characterized by $O \Vdash ``\hat{O} \in
G"$, not by $O \Vdash ``G \in \hat{O}"$, much less, as stated in
the original paper (p. 5), by $O \Vdash ``G \in O",$ which is not
merely mistaken, but actually incoherent, as $O$ is not a term in
the language.

Even if the latter is the way it's usually best thought of.

That needs explanation. Still working classically with
$2^{<\omega}$, note that $G$'s alter ego $\bigcup G$ is a member
of the topological space under consideration, $2^\omega,$ and is
characterized by $p \Vdash \bigcup G \in O(p),$ at least when
properly interpreted. $\bigcup G$ is certainly not in
$\hat{2^\omega}$, which is $2^\omega$ as interpreted in the ground
model $M$, also written as $(2^\omega)^M$, because $\bigcup G$ is
not even in $M$. Rather, $\bigcup G \in (2^\omega)^{M[G]}$.
Similarly, we could not say that $p \Vdash \bigcup G \in
\hat{O(p)};$ rather, $p \Vdash \bigcup G \in O(p),$ with the
latter $O(p)$ being the open set determined by $p$ in the
extension $M[G]$.

Now we're in a position to explain the mysterious, misleading, and
sometimes even mistaken comments of \cite{L0}. In Theorem 6 (p. 4)
of the original paper, where we're forcing (i.e. taking the
topological model) over the complex numbers $\mathbb{C}$, we refer
to the generic complex number as given by $O \Vdash ``G \in O".$
Strictly speaking, the generic $G$ is given by $O \Vdash ``\hat{O}
\in G".$ But the open sets of diameter less than $\epsilon$ cover
the space, so the generic determines a new complex number (as a
Dedekind cut). By simple inter-definability, this new number is
itself called $G$. It is determined by $O \Vdash ``G \in O",$
where the latter $O$ is viewed as the interpretation in the
extension of some ground-model description of $O$. For an example
of what such a description might be, every open set of complexes
is a union of countably many discs with rational center and
radius, and so can be described by a sequence of such
center-radius pairs $(\langle c_n, r_n \rangle)_{n \in \omega}$.

The story with the construction in section 3 of \cite {L0} is
similar, but more complicated. It is always the case that there is
a generic $G$ determined by $O \Vdash ``\hat{O} \in G".$ One
expects a $G'$, equidefinable with $G$, with $O$ forcing $G'$ to
be in $O$ as interpreted in the extension. While something along
those lines is likely to be true, it is not always so
straightforward as in the cases above. Section 3 is a case in
point. By analogy with simpler instances, one might reasonably
have guessed that the generic object $G$ is roughly the same as a
new member, there called $H$, of the topological space $F$, a
finite, non-empty set of complex numbers. The import of Theorem 8
is that that does not work, that the definition of $H$ as a set of
points leads to the empty set.

Of course, there is still $G$, and to understand any particular
topological model is to understand $G$. In some sense, $G$ is
(equidefinable with) a generalized member of the topological
space; the challenge is to determine what ``generalized'' means.
In this case, $G$ is the same as the distance function of
Proposition 11. (Since that distance function was derived from the
Riesz space $R$, $G$ can just as well be identified with $R$.) In
the earlier paper, the distance function of the model was given;
what remains to be done is to show that the generic can be
recovered from the distance function. This is not hard. Given $A
\in F,$ consider the open neighborhood $U$ of $A$ in normal form
with a positive piece of information $O_x$ for each $x \in A,$
with $O_x$ a disc with center $x$ and radius $\epsilon$ small
compared to the distances between the points in $A$. On all of the
points on the boundary of $O_x$, the distance function has a value
less than $2\epsilon.$ So some open set of $U$ is within the
circle of radius $2\epsilon$ around that boundary point. The
intersection of all those boundary points is exactly $O_x$. This
recovers $O_x$ from the distance function.

The situation is similar to the first construction of \cite {L4}.
There the topological space is the set of infinite, bounded
sequences of natural numbers. The generic is an infinite sequences
of naturals, but it's not bounded. Rather, it's pseudo-bounded, a
weakening of boundedness. Since classically boundedness and
pseudo-boundedness are equivalent, the generic can be viewed as a
new member of the space, as long as the space is understood as the
set of pseudo-bounded sequences. Similarly, in the case of
interest here, the generic cannot be viewed as a new member of the
space of finite, non-empty sets of complex numbers. Instead, in
the classical meta-theory, the finite, non-empty sets can be
identified with the distance functions on $\mathbb{C},$ and the
generic understood as a new distance function, interestingly with
no corresponding set of points from which it measures the
distance.

\end{document}